\newtheorem{thm}{Theorem}[section]
\newtheorem{lem}[thm]{Lemma}
\newtheorem{pro}[thm]{Proposition}
\newtheorem{ex}[thm]{Example}
\newtheorem{rmk}[thm]{Remark}
\newtheorem{defi}[thm]{Definition}
\newcommand {\emptycomment}[1]{}
\newcommand{\be }{\begin{equation}}
\newcommand{\ee }{\end{equation}}
\newcommand{\huaL}{\mathcal{L}}
\newcommand{\frkB}{\mathfrak B}
\newcommand{\frkL}{\mathfrak L}
\newcommand{\br}[1]{   [ \cdot,    \cdot  ]   }
\newcommand{\gl}{\mathfrak {gl}}
\newcommand{\ad}{\mathrm{ad}}
\begin{document}

\title[$F$-manifold color algebras]{$F$-manifold color algebras}

\author{Ming Ding, Zhiqi Chen and Jifu Li}
\address{Department of Mathematics and information science, Guangzhou University, Guangzhou,  P.R. China}
\email{m-ding04@mails.tsinghua.edu.cn}

\address{School of Mathematical Sciences and LPMC, Nankai University,
Tianjin, P.R. China}
\email{chenzhiqi@nankai.edu.cn}

\address{School of Science, Tianjin University of technology, Tianjin,  P.R. China}
\email{ljfanhan@126.com}
\vspace{-5mm}


\begin{abstract}
In this paper, we introduce the notion of $F$-manifold color algebras and study their properties which extend some results for $F$-manifold algebras.
\end{abstract}

\keywords{$F$-manifold color algebra, coherence $F$-manifold color algebra, pre-$F$-manifold color algebra}

\maketitle

\allowdisplaybreaks

\section{Introduction}\label{sec:intr}
The notion of Frobenius manifolds was invented  by Dubrovin \cite{Dub95}  in order to give a geometrical expression of the Witten-Dijkgraaf-Verlinde-Verlinde equations.
  In 1999, Hertling and Manin \cite{HerMa} introduced the concept of $F$-manifolds as  a relaxation of the conditions of Frobenius manifolds. Inspired by the investigation of algebraic structures of $F$-manifolds, the notion of an $F$-manifold algebra is given by Dotsenko \cite{Dot} in 2019 to relate the operad $F$-manifold algebras to the operad  pre-Lie
algebras. An $F$-manifold algebra is defined as a triple $(A,\cdot,[ , ])$ satisfying the following Hertling-Manin relation,
 \[
P_{x\cdot y}(z,w)=x\cdot P_{y}(z,w) + y\cdot P_{x}(z,w),\ \ \ \forall x,y,z,w\in A,
 \]
where $(A,\cdot)$ is a commutative associative algebra, $(A,[ , ])$ is a Lie algebra and $P_{x}(y,z)=[x,y\cdot z]-[x,y]\cdot  z-y\cdot [x,z]$.
 A pre-Lie algebra is a vector space $A$ with a bilinear multiplication
$\cdot$ satisfying
 \[
 (x\cdot y)\cdot z-x\cdot (y\cdot z)= (y\cdot x)\cdot z-y\cdot (x\cdot z),\ \ \ \forall x,y,z\in A.
 \]
Pre-Lie algebras have attracted a great deal of attentions  in many areas of mathematics and physics (see {\cite{Bai,Ban,Burde,ChaLiv,DSV,MT} and so on).

Recently, Liu, Sheng and Bai \cite{LSB} introduced the concept of pre-$F$-manifold algebras which gives rise to $F$-manifold algebras. They also studied representations of $F$-manifold algebras, and constructed many other examples of $F$-manifold algebras.

In this paper, we introduce the notions of an $F$-manifold color algebra and a pre-$F$-manifold color algebra which can be viewed as natural generalizations of an $F$-manifold algebra and a pre-$F$-manifold algebra,  and then extend some properties of an $F$-manifold algebra in \cite{LSB}  to the color case.

The paper is organized as follows. In Section 2, we summarize some basic concepts of Lie color algebras, pre-Lie color algebras, representations of
$\varepsilon$-commutative associative algebras and Lie color algebras. In Section 3, we introduce the concept of  an $F$-manifold color algebra and study its
representations. We also introduce the notion of a coherence $F$-manifold color algebra and obtain that an $F$-manifold color algebra with a nondegenerate symmetric bilinear form is a coherence $F$-manifold color algebra. In Section 4, we introduce the concept of a  pre-$F$-manifold color algebra
and show that a pre-$F$-manifold color algebra can naturally give rise to  an $F$-manifold color algebra.

Throughout this paper, we assume that $\mathbb K$ is an algebraically closed field  of characteristic zero and all the vector spaces are finite dimensional  over  $\mathbb K$.

\section{Lie color algebras and relative algebraic structures}

The concept of a Lie color algebra was introduced by Scheunert  \cite{Scheu} which has been widely studied.
In this section, we collect some definitions and the representations  of Lie color algebras and some Lie color admissible algebras such as pre-Lie color
algebras and $\varepsilon$-commutative associative algebras. One can refer to  \cite{BP,CMN,CSO,F,MCL,NW,Scheu,ScheuZ,SCL,WZ} for more details.

\begin{defi}
 Let $G$ be an abelian group. A map $\varepsilon :G\times G\rightarrow \mathbb{K}  \backslash \{0\}$ is called a skew-symmetric bicharacter on $G$ if the following
identities hold,
\begin{enumerate}
 \item [(i)] $\varepsilon(a, b)\varepsilon(b, a)=1$,
\item [(ii)] $\varepsilon(a, b+c)=\varepsilon(a, b)\varepsilon(a, c)$,
\item [(iii)]$\varepsilon(a+b, c)=\varepsilon(a, c)\varepsilon(b, c)$,
\end{enumerate}
for all $a, b, c\in G$.
\end{defi}

By the definition, it is obvious that $\varepsilon(a, 0)=\varepsilon(0, a)=1, \varepsilon(a,a)=\pm 1 \;\mbox{for all}\; a\in G$.

\begin{defi}
Let $G$ be an abelian group with bicharacter $\varepsilon$ as above. The $G$-graded vector space
$$ A=\bigoplus_{g\in G }A_{g}$$ is called a pre-Lie color algebra, if $A$ has
a bilinear multiplication operation $\cdot$ satisfying
\begin{enumerate}
\item   $A_{a}\cdot A_{b}\subseteq A_{a+b},$
\item   $(x\cdot y)\cdot z-x\cdot (y\cdot z)=\varepsilon(a,b)((y\cdot x)\cdot z-y\cdot (x\cdot z)),$
\end{enumerate}
for all $x\in A_{a}, y\in A_{b}, z\in A_{c}$, and $a, b, c\in G$.
\end{defi}

\begin{defi}
Let $G$ be an abelian group with bicharacter $\varepsilon$ as above. The $G$-graded vector space
$$ A=\bigoplus_{g\in G }A_{g}$$ is called a Lie color algebra, if $A$ has
a bilinear multiplication  $[ , ]:A\times A \rightarrow A$ satisfying
\begin{enumerate}
\item   $[A_{a}, A_{b}]\subseteq A_{a+b},$
\item   $[x,y]=-\varepsilon(a,b)[y,x],$
\item   $\varepsilon(c,a)[x,[y,z]]+ \varepsilon(b,c)[z,[x,y]]+ \varepsilon(a,b)[y,[z,x]]=0,$
\end{enumerate}
for all $x\in A_{a}, y\in A_{b}, z\in A_{c}$, and $a, b, c\in G$.
\end{defi}
\begin{rmk}
It is well known that a pre-Lie algebra $(A, \cdot)$ with a commutator
$[x, y]=x\cdot y-y\cdot x$ becomes a Lie algebra.  Similarly, one has a pre-Lie color algebra's version, that is a pre-Lie color algebra
$(A, \cdot,\varepsilon)$ with a commutator $[x, y]=x\cdot y - \varepsilon(x, y) y\cdot x$ becomes a Lie color algebra.
\end{rmk}

An element $x$ of the $G$-graded vector space $A$ is called homogeneous of degree $a$ if $x\in A_{a}$.
In the rest of this paper, for homogeneous elements $x\in A_a$, $y\in A_b$, $z\in A_c$,
we will write $\varepsilon(x, y)$ instead of $\varepsilon(a, b)$,  $\varepsilon(x+y, z)$ instead of $\varepsilon(a+b, c)$, and so on.
Furthermore, when the skew-symmetric bicharacter $\varepsilon(x, y)$ appears, it means that $x$ and $y$ are both homogeneous elements.

By an $\varepsilon$-commutative associative algebra $(A, \cdot, \varepsilon)$, we mean that $(A, \cdot)$ is a $G$-graded  associative algebra
with the following $\varepsilon$-commutativity  $$x\cdot y=\varepsilon(x, y)y\cdot x$$
for all homogeneous elements $x, y\in A$.

Let $V$ be a $G$-graded vector space.  A representation $(V, \mu)$ of an $\varepsilon$-commutative associative algebra $(A, \cdot, \varepsilon)$ is a linear map
$\mu:A\longrightarrow \gl(V)$ satisfying
$$\mu(x)v\in V_{a+b},\ \ \ \ \mu(x\cdot y)=\mu(x)\circ\mu(y)$$
for all  $v\in V_{a}, x\in A_{b}, y\in A_{c}.$
A representation $(V, \rho)$ of a Lie color algebra $(A, [ , ], \varepsilon)$ is a linear map
$\rho:A\longrightarrow \gl(V)$ satisfying
$$\rho(x)v\in V_{a+b},\ \ \ \ \rho([x, y])=\rho(x)\circ\rho(y)-\varepsilon(x, y)\rho(y)\circ \rho(x)$$
for all  $v\in V_{a}, x\in A_{b}, y\in A_{c}.$

We consider the dual space $V^*$ of the $G$-graded vector space $V$. Then $V^*=\bigoplus_{a\in G }V^*_{a}$ is also a $G$-graded space, where
$$V^*_{a}=\{\alpha \in V^*| \alpha(x)=0,  b\ne -a, \forall x\in V_b, b\in G \}.$$

Define a linear map $\mu^*:A\longrightarrow \gl(V^*)$ by
$$
\mu^*(x)\alpha\in V^*_{a+c},\ \ \ \  \langle \mu^*(x)\alpha,v\rangle=-\varepsilon(x, \alpha)\langle \alpha,\mu(x)v\rangle$$
for all $x\in A_a,v\in V_b, \alpha\in V^*_c.$

  It is easy to see  that

  (1)\ If  $(V, \mu)$ is a representation of an $\varepsilon$-commutative associative algebra $(A, \cdot, \varepsilon)$, then $(V^*,-\mu^*)$ is a representation of $(A, \cdot, \varepsilon)$;

  (2)\  If $(V, \mu)$ is a representation of a Lie color algebra $(A, [ , ], \varepsilon)$, then $(V^*, \mu^*)$ is a representation of $(A, [ , ], \varepsilon)$.

\section{$F$-manifold color algebras and representations}
In this section, we will introduce the notion of $F$-manifold color algebras, and generalize some results in \cite{LSB} to the color case.
\begin{defi}
An $F$-manifold color algebra is a quadruple $(A,\cdot,[ , ], \varepsilon)$, where $(A,\cdot, \varepsilon)$ is an $\varepsilon$-commutative associative algebra and
$(A,[ , ], \varepsilon)$ is a Lie color  algebra, such that for all homogeneous elements $x,y,z,w\in A$, the color Hertling-Manin relation holds:
\begin{equation}\label{eq:HM relation}
P_{x\cdot y}(z,w)=x\cdot P_{y}(z,w)+\varepsilon(x,y) y\cdot P_{x}(z,w),
\end{equation}
where $P_{x}(y,z)$ is defined by
\begin{equation}
 P_{x}(y,z)=[x,y\cdot z]-[x,y]\cdot z-\varepsilon(x,y) y\cdot [x,z].
\end{equation}
\end{defi}

\begin{rmk}
When $G=\{0\}$ and $\varepsilon(0,0)=1$, then  an $F$-manifold color algebra is exactly an
$F$-manifold algebra.
\end{rmk}

\begin{defi}
Let $(A,\cdot,[ , ], \varepsilon)$  be an $F$-manifold color algebra. A representation of $A$ is a triple $(V,\rho,\mu)$,
 such that
$(V,\rho)$ is a  representation of the Lie color algebra $(A,[ , ], \varepsilon)$ and $(V,\mu)$ is a representation of the
 $\varepsilon$-commutative associative algebra $(A,\cdot, \varepsilon)$ satisfying
   \begin{eqnarray*}
       R_{\rho,\mu}(x_{1}\cdot x_{2},x_{3})=\mu(x_{1}) R_{\rho,\mu}(x_{2},x_{3})+\varepsilon(x_{1},x_{2})\mu(x_{2})R_{\rho,\mu}(x_{1},x_{3}),\\
      \mu(P_{x_{1}}(x_{2},x_{3}))=\varepsilon(x_{1},x_{2}+x_{3})S_{\rho,\mu}(x_{2},x_{3})\mu(x_{1})-\mu(x_{1}) S_{\rho,\mu}(x_{2},x_{3}),
   \end{eqnarray*}
   where $R_{\rho,\mu},S_{\rho,\mu}: A\otimes A\rightarrow \gl(V)$ are defined by
   \begin{eqnarray}
  \label{eq:repH 1}R_{\rho,\mu}(x_{1},x_{2})&=&\rho(x_{1})\mu(x_{2})-\varepsilon(x_{1},x_{2})\mu(x_{2})\rho(x_{1})-\mu([x_{1},x_{2}]),\\
                  S_{\rho,\mu}(x_{1},x_{2})&=&\mu(x_{1})\rho(x_{2})+\varepsilon(x_{1},x_{2})\mu(x_{2})\rho(x_{1})-\rho(x_{1}\cdot x_{2})
   \end{eqnarray}
  for all homogeneous elements $x_{1},x_{2},x_{3}\in A$.
\end{defi}

In the rest of the paper, we  sometimes denote $R_{\rho,\mu}, S_{\rho,\mu}$  by $R, S$ respectvely if there is no risk of confusion.

\begin{ex}
  Let $(A,\cdot,[ , ],\varepsilon)$ be an $F$-manifold color algebra. Then $(A,\ad,\huaL)$ is a representation of $A$,
  where $\ad: A\longrightarrow \gl(A)$ defined by $\ad_{x}y=[x,y]$, and multiplication operator $\huaL:A\longrightarrow \gl(A)$ are defined  by $\huaL_{x}y=x\cdot y$ for all homogeneous elements $x, y\in A$.
\end{ex}

\begin{proof}
It is known that $(V,\ad)$ is a  representation of the Lie color algebra $(A,[ , ], \varepsilon)$ and $(V,\huaL)$ is a representation of the
$\varepsilon$-commutative associative algebra $(A,\cdot, \varepsilon)$.

Note that, for all homogeneous elements $x, y, z, w\in A$, we have
\begin{eqnarray*}
R(x, y)z&=& (ad_{x}\huaL_y-\varepsilon(x, y)\huaL_y ad_{x}-\huaL_{[x, y]})\cdot z\\
&=& [x, y\cdot z]-\varepsilon(x, y)y\cdot [x, y]-[x, y]\cdot z\\
&=& P_{x}(y, z).
\end{eqnarray*}
Thus
\begin{eqnarray*}
P_{x\cdot y} (z,w)=x\cdot P_y(z, w)+\varepsilon(x, y)y\cdot P_x(z, w)
\end{eqnarray*}
implies the equation
\begin{eqnarray*}
R(x\cdot y, z)w=\huaL_{x}R(y,z)w+\varepsilon(x, y)\huaL_{y}R(x,z)w.
\end{eqnarray*}
On the other hand
\begin{eqnarray*}
&&S(y, z)w=(\huaL_{y}ad_{z}+\varepsilon(y, z)\huaL_{z} ad_{y}-ad_{y\cdot z})w\\
&=& y\cdot [z, w]+\varepsilon(y, z)z\cdot [y, w]-[y\cdot z, w]\\
&=&-\varepsilon(z,w)y\cdot [w, z]-\varepsilon(y, w)\varepsilon(z, w)[w, y]\cdot z+\varepsilon(y+z, w)[w, y\cdot z]\\
&=&\varepsilon(y+z,w)([w, y\cdot z]- [w, y]\cdot z-\varepsilon(w, y)y\cdot [w, z])\\
&=&\varepsilon(y+z, w)P_w (y, z).
\end{eqnarray*}
Thus
\begin{eqnarray*}
&&\varepsilon(x,y+z)S(y, z)\huaL_{x}w-\huaL_xS(y, z)w\\
&=&\varepsilon(x,y+z)S(y, z)(x\cdot w)-x\cdot S(y, z)w\\
&=&\varepsilon(x,y+z)\varepsilon(y+z,x+w)P_{x\cdot w}(y, z)-\varepsilon(y+z, w)x\cdot P_w(y, z)\\
&=&\varepsilon(y+z, w)\{P_{x\cdot w}(y, z)-x\cdot P_w(y, z)\}\\
&=&\varepsilon(y+z, w)\varepsilon(x, w)w\cdot P_x(y, z)\\
&=&\varepsilon(x+y+z, w)w\cdot P_x(y, z)\\
&=&P_x(y, z)w.
\end{eqnarray*}
Hence, $(A,\ad,\huaL)$ is a representation of $A$.
\end{proof}

Let $(A,\cdot,[ , ], \varepsilon)$  be an $F$-manifold color algebra and $(V, \rho,\mu)$ a representation of $A$.  Note that $A\oplus V$ is  a G-graded vector space.  In the following, if we write $x+v\in A\oplus V$ as a homogeneous  element,
it means that $x,v$ are of the same degree as $x+v$.  For any homogeneous elements $x_1+v_1,x_2+v_2\in A\oplus V$, define
$$
  [x_1+v_1,x_2+v_2]_\rho=[x_1,x_2]+\rho(x_1)v_2-\varepsilon(x_{1},x_{2})\rho(x_2)v_1.
$$
It is well known that $(A\oplus V, [ , ]_\rho,\varepsilon)$ is a Lie color algebra.
Moreover, if one defines
$$
  (x_1+v_1)\cdot_{\mu}(x_2+v_2)=x_1\cdot x_2+\mu(x_1)v_2+\varepsilon(x_{1},x_{2})\mu(x_2)v_1,$$ then
 $(A\oplus V,\cdot_\mu, \varepsilon)$ is an $\varepsilon$-commutative associative algebra. In fact, we have
\begin{pro}\label{pro:semi-direct}
 With the above notations, let $(A,\cdot,[ , ], \varepsilon)$  be an $F$-manifold color algebra and $(V, \rho,\mu)$ a representation of $A$. Then
 $(A\oplus V,\cdot_{\mu},[ , ]_\rho,\varepsilon)$ is an $F$-manifold color algebra.
 \end{pro}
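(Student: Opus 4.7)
The plan is to verify directly the color Hertling--Manin relation for $(A\oplus V,\cdot_{\mu},[\,,\,]_{\rho},\varepsilon)$; the remaining axioms (that $\cdot_\mu$ defines an $\varepsilon$-commutative associative structure and that $[\,,\,]_{\rho}$ defines a Lie color bracket) are already recorded just before the statement. Writing $X_i=x_i+v_i\in A\oplus V$ for $i=1,2,3,4$, I would expand $P_{X_1\cdot_\mu X_2}(X_3,X_4)$, $X_1\cdot_\mu P_{X_2}(X_3,X_4)$ and $X_2\cdot_\mu P_{X_1}(X_3,X_4)$ using the explicit formulas for $\cdot_\mu$ and $[\,,\,]_\rho$, and then compare the $A$-component and the $V$-component separately.

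The $A$-component depends only on $x_1,x_2,x_3,x_4\in A$ and is exactly the color Hertling--Manin relation in $A$, hence holds by hypothesis. For the $V$-component, the decisive observation is that $\cdot_\mu$ and $[\,,\,]_\rho$ are each linear in the $V$-part of their inputs and produce no $V\cdot V$ or $[V,V]$ terms; consequently every monomial occurring in the $V$-component of the identity is linear in exactly one of $v_1,v_2,v_3,v_4$. By multilinearity it therefore suffices to check the relation in the four sub-cases where exactly one of $X_1,X_2,X_3,X_4$ lies in $V$ and the other three lie in $A$.

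In each sub-case the expansion can be reorganized in terms of the operators $R=R_{\rho,\mu}$ and $S=S_{\rho,\mu}$. When the $V$-argument is $v_3$ or $v_4$ (an argument of $P$) the identity reduces, after using the color Jacobi and $\varepsilon$-commutativity to move the $V$-factor to the right-most position, to
\[
R(x\cdot y,z)=\mu(x)R(y,z)+\varepsilon(x,y)\mu(y)R(x,z),
\]
which is the first defining axiom of a representation. When the $V$-argument is $v_1$ or $v_2$ (contained in the subscript of $P$), the identity collapses to
\[
\mu(P_{x}(y,z))=\varepsilon(x,y+z)S(y,z)\mu(x)-\mu(x)S(y,z),
\]
which is the second defining axiom. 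Thus the two halves of the semi-direct HM relation are controlled by $R$ and $S$ respectively, which is precisely why both representation axioms are needed.

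The main obstacle I anticipate is bookkeeping rather than conceptual: keeping the skew-symmetric bicharacter factors $\varepsilon(\cdot,\cdot)$ consistent when rearranging homogeneous elements, in particular when extracting a single $v_i$ out of expressions of the form $\mu(a)\rho(b)v_i$ or $\rho(a)\mu(b)v_i$ and matching these against the corresponding monomials on the opposite side of the relation. Once each sub-case has been brought into the form in which $R$ or $S$ appears with the argument pattern specified by the representation axioms, the verification is a direct substitution.
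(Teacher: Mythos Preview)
Your proposal is correct and follows essentially the same route as the paper. The paper first derives the closed formula
\[
P_{x_1+v_1}(x_2+v_2,x_3+v_3)=P_{x_1}(x_2,x_3)+R(x_1,x_2)v_3+\varepsilon(x_2,x_3)R(x_1,x_3)v_2+\varepsilon(x_1,x_2+x_3)S(x_2,x_3)v_1,
\]
which is exactly your ``linear in one $v_i$'' observation packaged as a single identity, and then compares the coefficients of $v_1,v_2,v_3,v_4$ on both sides of the Hertling--Manin relation; as you predicted, the $v_3,v_4$ coefficients match by the $R$-axiom and the $v_1,v_2$ coefficients by the $S$-axiom.
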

\begin{proof}
We only need to prove that the  color  Hertling-Manin relation holds.

For any homogeneous elements $x_1+v_1,x_2+v_2,x_3+v_3\in A\oplus V$, we have
\begin{eqnarray*}
&& P_{x_1+v_1}(x_2+v_2,x_3+v_3)\\
&=&[x_1+v_1,(x_2+v_2)\cdot_{\mu} (x_3+v_3)]_\rho-[x_1+v_1,x_2+v_2]_\rho \cdot_{\mu} (x_3+v_3)\\
&&-\varepsilon(x_1,x_2)( x_2+v_2)\cdot_{\mu} [x_1+v_1,x_3+v_3]_\rho\\
&=& [x_1, x_2\cdot x_3]+\rho(x_1)\{\mu(x_2)v_3+\varepsilon(x_2,x_3)\mu(x_3)v_2\}-\varepsilon(x_1,x_2+ x_3)\rho (x_2\cdot x_3)v_1-I-II.\\
\end{eqnarray*}
where
\begin{eqnarray*}
I&=&\{[x_1,x_2]+\rho(x_1)v_3-\varepsilon(x_1,x_2)\rho(x_2)v_1\}\cdot_{\mu} (x_3+v_3)\\
&=&[x_1,x_2]\cdot x_3+\mu([x_1,x_2])v_3+\varepsilon(x_1+x_2,x_3)\mu(x_3)\{\rho(x_1)v_2 -\varepsilon(x_1,x_2)\rho(x_2)v_1 \},\\
\end{eqnarray*}
and
\begin{eqnarray*}
II&=&\varepsilon(x_1,x_2)( x_2+v_2)\cdot_{\mu} \{[x_1,x_3]+\rho(x_1)v_3- \varepsilon(x_1,x_3)\rho(x_3)v_1\}\\
&=&\varepsilon(x_1,x_2)\{x_2\cdot [x_1, x_3]+\mu(x_2)(\rho(x_1)v_3-\varepsilon(x_1,x_3)\rho(x_3)v_1)\\
&&+\varepsilon(x_2,x_1+ x_3)\mu ([x_1,x_3])v_2\}.
\end{eqnarray*}
Thus
\begin{eqnarray*}
&&P_{x_1+v_1}(x_2+v_2,x_3+v_3)\\
&=&
P_{x_1}(x_2,x_3)+\{\rho(x_1)\mu(x_2)-\mu([x_1,x_2])-\varepsilon(x_1,x_2)\mu(x_2)\rho(x_1)\}v_3\\
&&+\{\varepsilon(x_2,x_3)\rho(x_1)\mu(x_3)-\varepsilon(x_1+x_2,x_3)\mu(x_3)\rho(x_1)\\
&&-\varepsilon(x_1,x_2)\varepsilon(x_2,x_1+x_3)\mu([x_1,x_3])\}v_2
+\{-\varepsilon(x_1,x_2+x_3)\rho(x_2\cdot x_3)\\
&&+\varepsilon(x_1+x_2,x_3)\varepsilon(x_1,x_2)\mu(x_3)\rho(x_2)+\varepsilon(x_1,x_2)\varepsilon(x_1,x_3)\mu(x_2)\rho(x_3)\}v_1\\
&=&P_{x_1}(x_2,x_3)+R(x_1,x_2)v_3 +\varepsilon(x_2,x_3)R(x_1,x_3)v_2+\varepsilon(x_1,x_2+x_3)S(x_2,x_3)v_1.
\end{eqnarray*}
Hence, for any homogeneous element $x_4+v_4\in A\oplus V$, we have
\begin{eqnarray*}
&& P_{(x_1+v_1)\cdot_{\mu}(x_2+v_2)}(x_3+v_3,x_4+v_4)\\
&=&P_{x_1\cdot x_2+\mu(x_1)v_2+\varepsilon(x_1,x_2)\mu(x_2)v_1}(x_3+v_3,x_4+v_4)\\
&=&P_{x_1\cdot x_2}(x_3,x_4)+R(x_1\cdot x_2,x_3)v_4+\varepsilon(x_3,x_4)R(x_1\cdot x_2,x_4)v_3\\
&&+\varepsilon(x_1+x_2,x_3+x_4)S(x_3,x_4)(\mu(x_1)v_2+\varepsilon(x_1,x_2)\mu(x_2)v_1).
\end{eqnarray*}
On the other hand
\begin{eqnarray*}
&&(x_1+v_1)\cdot_{\mu}P_{x_2+v_2}(x_3+v_3,x_4+v_4)\\
&=&(x_1+v_1)\cdot_{\mu}\{P_{x_2}(x_3,x_4)+R(x_2,x_3)v_4+\varepsilon(x_3,x_4)R(x_2,x_4)v_3+\varepsilon(x_2,x_3+x_4)S(x_3,x_4)v_2\}\\
&=&x_1\cdot P_{x_2}(x_3,x_4)+\mu(x_1)\{R(x_2,x_3)v_4+\varepsilon(x_3,x_4)R(x_2,x_4)v_3+\varepsilon(x_2,x_3+x_4)S(x_3,x_4)v_2\}\\
&&+\varepsilon(x_1,x_2+x_3+x_4)\mu(P_{x_2}(x_3,x_4))v_1,
\end{eqnarray*}
and
\begin{eqnarray*}
&&\varepsilon(x_1,x_2)(x_2+v_2)\cdot_{\mu} P_{x_1+v_1}(x_3+v_3,x_4+v_4)\\
&=&\varepsilon(x_1,x_2)\{x_2\cdot P_{x_1}(x_3,x_4)+\mu(x_2)\{R(x_1,x_3)v_4+\varepsilon(x_3,x_4)R(x_1,x_4)v_3\\
&&+\varepsilon(x_1,x_3+x_4)S(x_3,x_4)v_1\}+\varepsilon(x_2,x_1+x_3+x_4)\mu(P_{x_1}(x_3,x_4))v_2\}.
\end{eqnarray*}
Thus
\begin{eqnarray*}
&& (x_1+v_1)\cdot_{\mu}P_{x_2+v_2}(x_3+v_3,x_4+v_4)+\varepsilon(x_1,x_2)(x_2+v_2)\cdot_{\mu}P_{x_1+v_1}(x_3+v_3,x_4+v_4)\\
&=& x_1\cdot P_{x_2}(x_3,x_4)+\varepsilon(x_1,x_2)x_2\cdot P_{x_1}(x_3,x_4)\\
&&+ \{\mu(x_1)R(x_2,x_3)+\varepsilon(x_1,x_2)\mu(x_2)(R(x_1,x_3))\}v_4\\
&&+ \{\varepsilon(x_3,x_4)\mu(x_1)R(x_2,x_4)+\varepsilon(x_1,x_2)\varepsilon(x_3,x_4)\mu(x_2)R(x_1,x_4)\}v_3\\
&&+ \{\varepsilon(x_2,x_3+x_4)\mu(x_1)S(x_3,x_4)+\varepsilon(x_1,x_2)\varepsilon(x_2,x_1+x_3+x_4)\mu(P_{x_1}(x_3,x_4))\}v_2\\
&&+ \varepsilon(x_1,x_2+x_3+x_4)\{\mu(x_2)S(x_3,x_4)+\mu(P_{x_2}(x_3,x_4))\}v_1\\
&=& P_{(x_1+v_1)\cdot_{\mu}(x_2+v_2)}(x_3+v_3,x_4+v_4).
\end{eqnarray*}
By the definition of an $F$-manifold color algebra, the conclusion follows immediately.
\end{proof}

Let $(V, \rho,\mu)$ be a representation of an $F$-manifold algebra, then the triple $(V^*, \rho^*,-\mu^*)$ is not necessarily  a representation of this $F$-manifold algebra \cite{LSB}. In the following, we prove the version of an $F$-manifold color algebra.

\begin{pro}
  Let $(A,\cdot,[ , ],\varepsilon)$ be an $F$-manifold color algebra,  $(V,\rho)$  a  representation of the Lie color algebra
  $(A,[ , ], \varepsilon)$, and $(V,\mu)$  a representation of the $\varepsilon$-commutative associative algebra $(A,\cdot, \varepsilon)$, such that, for any homogeneous elements $x,y,z\in A$,
 \begin{eqnarray*}
     \label{eq:corep 1}R_{\rho,\mu}(x\cdot y,z)=\varepsilon(x,y+z) R_{\rho,\mu}(y,z)\mu(x)+ \varepsilon(y,z)R_{\rho,\mu}(x,z) \mu(y),\\
     \label{eq:corep 2} \mu(P_x(y,z))=-\varepsilon(x,y+z)T_{\rho,\mu}(y,z)\mu(x)+\mu(x) T_{\rho,\mu}(y,z),
   \end{eqnarray*}
   where $R_{\rho,\mu}$ is given by \eqref{eq:repH 1} and $T_{\rho,\mu}: A\otimes A\rightarrow \gl(V)$ is defined by
   \begin{eqnarray*}
    \label{eq:repH 3} T_{\rho,\mu}(x,y)&=&-\varepsilon(x,y)\rho(y)\mu(x)-\rho(x)\mu(y)+\rho(x\cdot y),
   \end{eqnarray*}
   then $(V^*,\rho^*,-\mu^*)$ is  a representation of $A$.
\end{pro}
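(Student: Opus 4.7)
The plan is to directly check the two compatibility axioms of an $F$-manifold color algebra representation for the candidate triple $(V^*,\rho^*,-\mu^*)$. The underlying statements, that $(V^*,\rho^*)$ is a representation of the Lie color algebra $(A,[\,,\,],\varepsilon)$ and that $(V^*,-\mu^*)$ is a representation of the $\varepsilon$-commutative associative algebra $(A,\cdot,\varepsilon)$, have already been recorded in Section~2, so all the work lies in the two cross-compatibility identities.

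First I would compute the dual versions $R_{\rho^*,-\mu^*}$ and $S_{\rho^*,-\mu^*}$ in a tractable form. Pairing with $v\in V$ and a homogeneous $\alpha\in V^*$, and using the definitions of $\rho^*$ and $\mu^*$ together with $\varepsilon(a,b)\varepsilon(b,a)=1$, a direct computation produces
\begin{align*}
\langle R_{\rho^*,-\mu^*}(x_1,x_2)\alpha,v\rangle &=\varepsilon(x_1+x_2,\alpha)\langle\alpha,R_{\rho,\mu}(x_1,x_2)v\rangle,\\
\langle S_{\rho^*,-\mu^*}(x_1,x_2)\alpha,v\rangle &=\varepsilon(x_1+x_2,\alpha)\langle\alpha,T_{\rho,\mu}(x_1,x_2)v\rangle.
\end{align*}
The second identity is the key observation: dualization converts $S_{\rho,\mu}$ into $T_{\rho,\mu}$ (not back into $S_{\rho,\mu}$), which is exactly why the hypothesis of the proposition is phrased in terms of $R$ and $T$ rather than $R$ and $S$.

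Next I would verify the first axiom
\[
R_{\rho^*,-\mu^*}(x_1\cdot x_2,x_3)=(-\mu^*)(x_1)R_{\rho^*,-\mu^*}(x_2,x_3)+\varepsilon(x_1,x_2)(-\mu^*)(x_2)R_{\rho^*,-\mu^*}(x_1,x_3)
\]
by pairing both sides with $\alpha$ and $v$, applying the two identities above, and invoking the first hypothesis on $R_{\rho,\mu}(x\cdot y,z)$; the equality then reduces to a tautology once the $\varepsilon$-factors arising from commuting $\mu^*$ past $R_{\rho^*,-\mu^*}$ are matched against the factors $\varepsilon(x_1,x_2+x_3)$ and $\varepsilon(x_2,x_3)$ supplied by the hypothesis. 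The second axiom, involving $(-\mu^*)(P_{x_1}(x_2,x_3))$, is handled analogously using the hypothesis on $\mu(P_x(y,z))$ after the same kind of pairing against $\alpha$ and $v$.

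The principal obstacle is bookkeeping the $\varepsilon$-factors. Each dualization inserts a factor $-\varepsilon(x_i,\alpha)$, and composing two dual maps produces cross terms such as $\varepsilon(x_1,x_2)\varepsilon(x_1+x_2,\alpha)$ and $\varepsilon(x_2,x_1+\alpha)$ that must be simplified via bilinearity of $\varepsilon$ and the symmetry $\varepsilon(a,b)\varepsilon(b,a)=1$ before matching the hypothesis; in particular, one must verify that the $\varepsilon$-factors intrinsic to the hypotheses reappear in the correct positions on the dual side. Once this bookkeeping is executed carefully, both axioms follow without further input from the $F$-manifold color structure.
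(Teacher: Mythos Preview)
Your proposal is correct and follows essentially the same approach as the paper: the paper too first establishes the two pairing identities relating $R_{\rho^*,-\mu^*}$ to $R_{\rho,\mu}$ and $S_{\rho^*,-\mu^*}$ to $T_{\rho,\mu}$, and then pairs each axiom against a test vector to reduce it to the corresponding hypothesis, with all the work lying in tracking the $\varepsilon$-factors. Your observation that dualization sends $S$ to $T$ is exactly the crux the paper exploits.
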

\begin{proof}

Assume that $x,y,z\in A,v\in V,\alpha\in V^*$ are all homogeneous elements.  First, we claim the following two identities:
\begin{eqnarray*}
\langle R_{\rho^*,-\mu^*}(x,y)(\alpha),v\rangle&=&\langle\alpha,\varepsilon(x+y,\alpha)R_{\rho,\mu}(x,y)v\rangle;\\
\langle S_{\rho^*,-\mu^*}(x,y)(\alpha),v\rangle&=&\langle\alpha,\varepsilon(x+y,\alpha)T_{\rho,\mu}(x,y)v\rangle.
\end{eqnarray*}
The claims follow from  some direct calculations, respectively:
\begin{eqnarray*}
&&\langle R_{\rho^*,-\mu^*}(x,y)(\alpha),v\rangle\\
&=&\langle(-\rho^*(x)\mu^*(y)+\varepsilon(x,y)\mu^*(y)\rho^*(x)+\mu^*([x,y])\alpha,v\rangle\\
&=&\varepsilon(x,y+\alpha)\langle\mu^*(y)\alpha,\rho(x)v\rangle-\varepsilon(x,y)\varepsilon(y,x+\alpha)\langle(\rho^*(x)\alpha,\mu(y)v\rangle\\
&&-\varepsilon(x+y,\alpha)\langle\alpha,\mu([x,y])v\rangle\\
&=&-\varepsilon(x,y)\varepsilon(x+y,\alpha)\langle\alpha,\mu(y)\rho(x)v\rangle
+\varepsilon(y, \alpha)\varepsilon(x,\alpha)\langle\alpha,\rho(x)\mu(y)v\rangle\\
&&-\varepsilon(x+y,\alpha)\langle\alpha,\mu([x,y])v\rangle\\
&=&\langle\alpha,\varepsilon(x+y,\alpha)\{-\varepsilon(x, y)\mu(y)\rho(x)+\rho(x)\mu(y)-\mu([x,y])\}v\rangle\\
&=&\langle\alpha,\varepsilon(x+y,\alpha)R_{\rho,\mu}(x,y)v\rangle,
\end{eqnarray*}
and
\begin{eqnarray*}
&&\langle S_{\rho^*,-\mu^*}(x,y)(\alpha),v\rangle\\
&=&\langle\{-\mu^*(x)\rho^*(y)-\varepsilon(x,y)\mu^*(y)\rho^*(x)-\rho^*(x\cdot y)\}\alpha,v\rangle\\
&=&-\varepsilon(x,y+\alpha)\varepsilon(y,\alpha)\langle\alpha,\rho(y)\mu(x)v\rangle
-\varepsilon(y,\alpha)\varepsilon(x,\alpha)\langle\alpha,\rho(x)\mu(y)v\rangle \\
&&+\varepsilon(x+y,\alpha)\langle\alpha,\rho(x\cdot y)v\rangle\\
&=&\langle\alpha,\varepsilon(x+y,\alpha)\{-\varepsilon(x,y)\rho(y)\mu(x)-\rho(x)\mu(y)+\rho(x\cdot y)\}v\rangle\\
&=&\langle\alpha,\varepsilon(x+y,\alpha)T_{\rho,\mu}(x,y)v\rangle.
\end{eqnarray*}

With the above identities, we have
\begin{eqnarray*}
 &&\langle\{ R_{\rho^*,-\mu^*}(x\cdot y,z)+\mu^*(x) R_{\rho^*,-\mu^*}(y,z)+\varepsilon(x,y)\mu^*(y) R_{\rho^*,-\mu^*}(x,z)\}\alpha,v\rangle\\
 &=&\langle\alpha,\varepsilon(x+y+z,\alpha)R_{\rho,\mu}(x\cdot y,z)v\rangle
 - \varepsilon(x,y+z+\alpha)\varepsilon(y+ z, \alpha)\langle \alpha, R_{\rho,\mu}(y,z)\mu(x)v\rangle\\
 &\quad&-\varepsilon(x+z,\alpha)\varepsilon(y,z+\alpha)\langle \alpha, R_{\rho,\mu}(x,z)\mu(y)v\rangle \\
 &=&\varepsilon(x+y+z,\alpha)\langle \alpha ,  \{R_{\rho,\mu}(x\cdot y,z)-\varepsilon(x,y+z)R_{\rho,\mu}(y,z)\mu(x)
 -\varepsilon(y,z) R_{\rho,\mu}(x,z)\mu(y)\}v  \rangle \\
&=&0.
\end{eqnarray*}

And
\begin{eqnarray*}
 &&\langle \{-\mu^*(P_x(y,z))+\varepsilon(x,y+z)S_{\rho^*,-\mu^*}(y,z)\mu^*(x)-\mu^*(x) S_{\rho^*,-\mu^*}(y,z)\}\alpha,v\rangle\\
 &=&\varepsilon(x+y+z,\alpha) \langle \alpha,\mu(P_x(y,z))v\rangle
 + \varepsilon(x,y+z)\varepsilon(y+z,x+\alpha)\langle \mu^*(x)\alpha,  T_{\rho,\mu}(y,z)v\rangle\\
 &\quad&  + \varepsilon(x,y+z+\alpha)\langle S_{\rho^*,-\mu^*}(y,z)\alpha , \mu(x) v \rangle\\
 &=&\varepsilon(x+y+z,\alpha) \langle \alpha,\mu(P_x(y,z))v\rangle
  -\varepsilon(y+z,\alpha)\varepsilon(x,\alpha)\langle \alpha,  \mu(x)T_{\rho,\mu}(y,z)v\rangle\\
 &\quad&  + \varepsilon(x,y+z+\alpha)\varepsilon(y+z,\alpha)\langle\alpha ,  T_{\rho,\mu}(y,z)\mu(x) v \rangle\\
 &=&\varepsilon(x+y+z,\alpha)\langle \alpha,\{\mu(P_x(y,z))
  -\mu(x)T_{\rho,\mu}(y,z) + \varepsilon(x,y+z)T_{\rho,\mu}(y,z)\mu(x)\}v \rangle\\
 &=&0.
\end{eqnarray*}
Thus, the conclusion follows immediately from the definition of representations and the hypothesis.
\end{proof}

\begin{defi}
  A  coherence $F$-manifold color algebra is an $F$-manifold color algebra such that,
for all homogeneous elements $x,y,z,w\in A$, the following hold
  \begin{eqnarray*}
   P_{x\cdot y}(z,w)&=&\varepsilon(x,y+z)P_y(z,x\cdot w)+\varepsilon(y,z)P_x(z,y\cdot w),\\
   P_x(y,z) w&=&-\varepsilon(x,y+z) T(y,z)(x\cdot w)+xT(y,z)(w),
  \end{eqnarray*}
 where
  $$ T(y,z)(w)=-\varepsilon(y,z)[z, y\cdot w]-[y, z\cdot w]+[y\cdot z, w].$$
\end{defi}

\begin{pro}
Let $(A,\cdot,[ , ], \varepsilon)$ be an $F$-manifold color algebra, and $\frak B$
 a nondegenerate symmetric bilinear form on $A$ satisfying
\begin{equation*}
\frak B(x\cdot y,z)=\frak B(x,y\cdot z),\;\;\frak
B([x,y],z)=\frak B(x,[y,z]),
\end{equation*}
for all homogeneous elements $x,y,z\in A$,
then $(A,\cdot,[ , ], \varepsilon)$ is a coherence $F$-manifold color algebra.
\end{pro}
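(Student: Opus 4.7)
The plan is to exploit the invariance of $\frkB$ to derive an adjoint identity between $P$ and $T$, and then to combine it with the Hertling--Manin relation and nondegeneracy of $\frkB$ to deduce both coherence identities.

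\textbf{Adjoint identity.}  First I would establish
\begin{equation*}
\frkB\bigl(P_a(b,c),\,d\bigr) \;=\; \frkB\bigl(a,\,T(b,c)(d)\bigr)
\end{equation*}
for all homogeneous $a,b,c,d\in A$.  Expand $P_a(b,c)=[a,b\cdot c]-[a,b]\cdot c-\varepsilon(a,b)\,b\cdot[a,c]$ and transfer $a$ out of the first slot of $\frkB$ using $\frkB([x,y],z)=\frkB(x,[y,z])$ and $\frkB(x\cdot y,z)=\frkB(x,y\cdot z)$.  The third term requires the $\varepsilon$-commutativity $b\cdot[a,c]=\varepsilon(b,a+c)[a,c]\cdot b$ before transfer; the accumulated scalar $\varepsilon(a,b)\varepsilon(b,a+c)$ collapses to $\varepsilon(b,c)$ via $\varepsilon(a,b)\varepsilon(b,a)=1$, so that the three pieces assemble to $\frkB\bigl(a,\,[b\cdot c,d]-[b,c\cdot d]-\varepsilon(b,c)[c,b\cdot d]\bigr)=\frkB(a,T(b,c)(d))$.

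\textbf{Deducing the coherence identities.}  With the adjoint identity in hand, pair each coherence relation with an arbitrary $u\in A$.  For the second, the left-hand side becomes $\frkB(P_x(y,z)\cdot w,u)=\frkB(P_x(y,z),w\cdot u)=\frkB(x,T(y,z)(w\cdot u))$; the right-hand side transforms analogously into a $\frkB(x,-)$ pairing involving $T(y,z)(u)$ and $T(y,z)(x\cdot w)$, and nondegeneracy reduces the statement to an operator identity on $T$.  This identity is verified by expanding each commutator $[a,b\cdot c]$ occurring in $T(y,z)(w\cdot u)$ via $[a,b\cdot c]=P_a(b,c)+[a,b]\cdot c+\varepsilon(a,b)\,b\cdot[a,c]$, and invoking the Hertling--Manin relation to absorb the resulting $P_{y\cdot z}(-,-)$ term.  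The first coherence identity is then deduced by a symmetric pairing: apply the adjoint identity to both sides, use $\frkB(x\cdot y,v)=\frkB(x,y\cdot v)$ on the left, and invoke the already-established second coherence identity (rewritten as a formula for $T(y,z)(x\cdot w)$) on the right.  Both sides collapse to the same combination of $\frkB(x,-)$ and $\frkB(y,-)$ pairings, so nondegeneracy concludes.

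\textbf{Main obstacle.}  The principal difficulty is the $\varepsilon$-sign bookkeeping: every argument swap, every application of $\varepsilon$-commutativity, and every invariance transfer contributes a factor, and the cancellations rely on systematic use of $\varepsilon(a,b)\varepsilon(b,a)=1$ and $\varepsilon(a,b+c)=\varepsilon(a,b)\varepsilon(a,c)$.  Modulo this combinatorial overhead, the proof is essentially elementary once the adjoint identity is in place.
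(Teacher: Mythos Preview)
Your adjoint identity $\frkB(P_a(b,c),d)=\frkB(a,T(b,c)(d))$ is correct and is essentially the paper's key lemma in a different guise (the paper records it as $\frkB(T(y,z)(w_1),w_2)=\varepsilon(y+z,w_1+w_2)\frkB(w_1,P_{w_2}(y,z))$, which is your identity after swapping slots via symmetry of $\frkB$).  So the foundational step is fine.

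The problem is how you propose to deduce the coherence identities from it.  For the second identity you want to pair with $u$ and pull everything into the form $\frkB(x,\,-)$.  Two of the three terms cooperate, but the term $-\varepsilon(x,y+z)\,T(y,z)(x\cdot w)$ does not: $x$ sits \emph{inside} the argument of $T$, and your adjoint identity only lets you extract the first slot of $\frkB$, not an interior factor.  If you push through anyway (via symmetry, adjoint, and invariance) you land on an identity of the shape $T(y,z)(w\cdot u)-T(y,z)(w)\cdot u=\varepsilon(\cdots)\,w\cdot P_u(y,z)$, which is not a tautology and is not what your expansion ``via $[a,b\cdot c]=P_a(b,c)+\cdots$'' verifies; that expansion produces additional $P_{y\cdot z}(w,u)-P_y(z\cdot w,u)-\varepsilon(y,z)P_z(y\cdot w,u)$ terms that do not cancel without further work.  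The paper avoids this by pivoting around $w_1$ rather than $x$: it uses its version of the adjoint identity to rewrite every term as $\frkB(w_1,\,-)$, and then the second slot becomes exactly $P_x(y,z)\cdot w_2+\varepsilon(y+z,w_2)x\cdot P_{w_2}(y,z)-\varepsilon(y+z,w_2)P_{x\cdot w_2}(y,z)$, which vanishes by Hertling--Manin for the product $x\cdot w_2$.

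Your plan for the first coherence identity has a further slip: after applying the adjoint identity, the right-hand side involves $T(z,\,x\cdot w)(u)$ and $T(z,\,y\cdot w)(u)$, i.e.\ the products sit in the \emph{second argument} of $T$.  The second coherence identity, however, gives a formula for $T(y,z)(x\cdot w)$, where the product is the element $T$ is \emph{applied to}.  These are different operators, so ``invoke the already-established second coherence identity'' does not apply as stated.  The paper handles the first identity independently, via a separate symmetry lemma $\frkB(P_x(y,z),w)=\varepsilon(x+y,z)\frkB(z,P_x(y,w))$ that moves the third argument of $P$ across $\frkB$; this again reduces the pairing to the Hertling--Manin relation directly.
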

\begin{proof}
First, we prove that
$$\frkB(P_x(y,z),w)=\varepsilon(x+y,z)\frkB(z, P_x(y,w))$$
for all homogeneous elements $x,y,z,w\in A$.

In fact, we have
\begin{eqnarray*}
 &&\frkB(P_x(y,z),w)\\
 &=& \frkB ([x,y\cdot z]-[x,y]\cdot z-\varepsilon(x,y) y\cdot [x,z], w)\\
 &=&-\varepsilon(x, y+z)\frkB ([y\cdot z, x], w)-\varepsilon(x+y, z)\frkB (z, [x,y]\cdot w)
 -\varepsilon(x,y)\varepsilon(y,x+z)\frkB ([x,z], y\cdot w)\\
 &=&-\varepsilon(x, y+z)\frkB (y\cdot z, [x, w])-\varepsilon(x+y, z)\frkB (z, [x,y]\cdot w)
 +\varepsilon(y,z)\varepsilon(x,z)\frkB (z, [x, y\cdot w])\\
  &=&-\varepsilon(x, y+z)\varepsilon(y, z)\frkB (z,y\cdot [x, w])-\varepsilon(x+y, z)\frkB (z, [x,y]\cdot w)
 +\varepsilon(x+y,z)\frkB (z, [x, y\cdot w])\\
&=& \varepsilon(x+y,z) \frkB (z, -\varepsilon(x, y)y\cdot [x, w]-[x,y]\cdot w+[x, y\cdot w])\\
&=& \varepsilon(x+y,z) \frkB (z, P_x(y, w)).
\end{eqnarray*}
By the above relation, for all homogeneous elements $x,y,z,w_1,w_2\in A$, we have
\begin{eqnarray*}
&&\frkB(P_{x\cdot y}(z,w_1)-\varepsilon(x, y+z) P_y(z,x\cdot w_1)-\varepsilon(y,z)P_x(z,y\cdot w_1),w_2)\\
&=&\varepsilon(x+y+z,w_1) \frkB(w_1, P_{x\cdot y}(z,w_2))-\varepsilon(x,y+z)\varepsilon(y+ z,x+ w_1) \frkB(x\cdot w_1, P_y(z,w_2))\\
&\quad& -\varepsilon(y, z)\varepsilon(x\cdot z, y\cdot w_1) \frkB(y\cdot w_1, P_x(z,w_2))\\
&=&\varepsilon(x+y+z,w_1) \frkB(w_1, P_{x\cdot y}(z,w_2))-\varepsilon(x,y+z)\varepsilon(y\cdot z,x\cdot w_1) \varepsilon(x, w_1)
\frkB(w_1, x\cdot P_y(z,w_2))\\
&\quad& -\varepsilon(y, z)\varepsilon(x\cdot z, y\cdot w_1) \varepsilon(y, w_1)\frkB(w_1, y\cdot P_x(z,w_2))\\
&=&\varepsilon(x+y+z,w_1) \frkB(w_1, P_{x\cdot y}(z,w_2))-\varepsilon(x+y+z,w_1)\frkB(w_1, x\cdot P_y(z,w_2))\\
&\quad& -\varepsilon(y, z)\varepsilon(x+z, y) \varepsilon(x+y+z, w_1)\frkB(w_1, y\cdot P_x(z,w_2))\\
&=&\varepsilon(x+y+z,w_1) \frkB(w_1, P_{x\cdot y}(z,w_2))-\varepsilon(x+y+z,w_1)\frkB(w_1, x\cdot P_y(z,w_2))\\
&\quad& -\varepsilon(x, y)\varepsilon(x+y+z, w_1)\frkB(w_1, y\cdot P_x(z,w_2))\\
&=&\varepsilon(x+y+z,w_1) \frkB(w_1, P_{x\cdot y}(z,w_2)-x\cdot P_y(z,w_2) -\varepsilon(x, y)y\cdot P_x(z,w_2))\\
&=&0.
\end{eqnarray*}
We claim the following identity:
$$\frkB(T(y,z)(x\cdot w_1), w_2)=\varepsilon(y+z, w_1+w_2)\frkB(w_1, P_{w_2}(y,z)).$$
In fact, we have
\begin{eqnarray*}
&&\frkB(T(y,z)(x\cdot w_1), w_2)\\
&=&\frkB(-\varepsilon(y, z)[z,y\cdot w_1]-[y, z\cdot w_1]+[y\cdot z, w_1], w_2)\\
&=&\varepsilon(y, z)\varepsilon(z, y+w_1)\frkB(y\cdot w_1, [z, w_2])+\varepsilon(y, z+ w_1)\frkB(z\cdot w_1, [y, w_2])
-\varepsilon(y+z, w_1)\frkB(w_1, [yz, w_2])\\
&=&\varepsilon(z, w_1)\varepsilon(y, w_1)\frkB(w_1, y\cdot [z, w_2])+\varepsilon(y, z+ w_1)\varepsilon(z, w_1)\frkB(w_1, z\cdot [y, w_2])\\
&&-\varepsilon(y+z, w_1)\frkB(w_1, [y\cdot z, w_2])\\
&=&\varepsilon(y+z, w_1)\frkB(w_1, y\cdot [z, w_2])+\varepsilon(y+z, w_1)\varepsilon(y, z)\frkB(w_1, z\cdot [y, w_2])
-\varepsilon(y+z, w_1)\frkB(w_1, [y\cdot z, w_2])\\
&=&\varepsilon(y+z, w_1)\frkB(w_1, y\cdot [z, w_2]+\varepsilon(y, z)z\cdot [y, w_2]-[y\cdot z, w_2])\\
&=&\varepsilon(y+z, w_1)\frkB(w_1, \varepsilon(y+z, w_2)P_{w_2}(y,z))\\
&=&\varepsilon(y+z, w_1+w_2)\frkB(w_1, P_{w_2}(y,z)).\\
\end{eqnarray*}
With the above identity, we have
\begin{eqnarray*}
&&\frkB(P_x(y,z)\cdot w_1+\varepsilon(x, y+z)T(y,z)(x\cdot w_1)-x\cdot T(y,z)(w_1),w_2)\\
&=&\varepsilon(x+y+z, w_1)\frkB(w_1, P_{x}(y,z)w_2)+\varepsilon(x, y+z)\varepsilon(y+z, x+w_1+w_2)\frkB(x\cdot w_1, P_{w_2}(y, z))\\
&\quad&  -\varepsilon(x, y+z+w_1)\frkB(T(y, z)w_1, x\cdot w_2)\\
&=&\varepsilon(x+y+z, w_1)\frkB(w_1, P_{x}(y, z)w_2)+\varepsilon(x, w_1)\varepsilon(y+z, w_1+w_2)\frkB(w_1, x\cdot P_{w_2}(y, z))\\
&\quad&  -\varepsilon(x, y+z+w_1)\varepsilon(y+z, x+w_1+w_2)\frkB(w_1, P_{x\cdot w_2}(y, z))\\
&=&\varepsilon(x+y+z, w_1)\frkB(w_1, P_{x}(y,z)w_2)+\varepsilon(x, w_1)\varepsilon(y+z, w_1+w_2)\frkB(w_1, x\cdot P_{w_2}(y, z))\\
&\quad&  -\varepsilon(x, w_1)\varepsilon(y+z, w_1+w_2)\frkB(w_1, P_{x\cdot w_2}(y, z))\\
&=&\varepsilon(x+y+z, w_1)\frkB(w_1, P_{x}(y, z)w_2+\varepsilon(y+z, w_2)x\cdot P_{w_2}(y, z)-\varepsilon(y+z, w_2)P_{x\cdot w_2}(y, z))\\
&=&\varepsilon(x+y+z, w_1)\frkB(w_1, P_{x}(y, z)w_2+\varepsilon(y+z, w_2)x\cdot P_{w_2}(y, z)\\
&&-(P_{x}(y, z)w_2+\varepsilon(y+z, w_2)x\cdot P_{w_2}(y, z)))\\
&=&0.
\end{eqnarray*}

By  the assumptions that $A$ is an $F$-manifold color algebra and $\frkB$ is nondegenerate, the conclusion is obtained.
\end{proof}

\section{Pre-$F$-manifold color algebras}
In this section, we introduce the notion of a  pre-$F$-manifold color algebra,
and then construct an $F$-manifold color algebra from a pre-$F$-manifold color algebra.

\begin{defi}
A triple $(A,\diamond, \varepsilon)$ is called a  Zinbiel color algebra if
$A$ is a G-graded vector space and  $\diamond:A\otimes A\longrightarrow A$ is
a bilinear multiplication satisfying
\begin{enumerate}
\item   $A_{a}\diamond A_{b}\subseteq A_{a+b},$
\item   $ x\diamond(y\diamond z)=(x\diamond y)\diamond z+\varepsilon(x, y)(y\diamond x)\diamond z,$
\end{enumerate}
for all $x\in A_{a}, y\in A_{b}, z\in A_{c}$, and $a, b, c\in G$.
\end{defi}

Let $(A,\diamond, \varepsilon)$ be a Zinbiel color algebra.  For any homogeneous elements $x, y\in A$, define $$x\cdot y=x\diamond y+\varepsilon(x, y)y\diamond x,$$
and  define $\frkL: A\longrightarrow\gl(A)$ by
\begin{equation}
\frkL_{ x}y=x\diamond y.
\end{equation}

\begin{lem}\label{lem4.2}
With the above notations, we have that $(A,\cdot, \varepsilon)$ is an $\varepsilon$-commutative associative algebra, and $(A,\frkL)$ is its representation.
\end{lem}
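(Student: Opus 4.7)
The plan is to verify three things in order: that $\cdot$ is $\varepsilon$-commutative, that $\cdot$ is associative, and that $\frkL$ defines a representation of $(A,\cdot,\varepsilon)$. The grading $A_a\cdot A_b\subseteq A_{a+b}$ is inherited from $\diamond$ and needs no comment.

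The $\varepsilon$-commutativity is a one-line consequence of the definition and the bicharacter axiom $\varepsilon(x,y)\varepsilon(y,x)=1$: for homogeneous $x,y$, I expand $\varepsilon(x,y)\,y\cdot x=\varepsilon(x,y)(y\diamond x+\varepsilon(y,x)\,x\diamond y)$ and collect to recover $x\cdot y$. The representation property is almost as short. The grading $\frkL_x(A_b)\subseteq A_{a+b}$ is immediate. For the multiplicativity $\frkL_{x\cdot y}=\frkL_x\circ \frkL_y$, I compute $\frkL_{x\cdot y}z=(x\diamond y+\varepsilon(x,y)\,y\diamond x)\diamond z$ and apply the color Zinbiel identity (axiom (2) of the definition) directly to the right-hand side to obtain $x\diamond(y\diamond z)=\frkL_x\frkL_y z$.

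Associativity is the only substantial step. I would expand $(x\cdot y)\cdot z$ into four $\diamond$-monomials; the pair $(x\diamond y)\diamond z+\varepsilon(x,y)(y\diamond x)\diamond z$ collapses by Zinbiel to $x\diamond(y\diamond z)$, while the remaining two terms $\varepsilon(x+y,z)\,z\diamond(x\diamond y)$ and $\varepsilon(x,y)\varepsilon(x+y,z)\,z\diamond(y\diamond x)$ each get expanded by Zinbiel into a sum of two $\diamond$-monomials of the form $(?\diamond?)\diamond?$. I would carry out the same procedure for $x\cdot(y\cdot z)$: the pair inside $(y\diamond z)\diamond x+\varepsilon(y,z)(z\diamond y)\diamond x$ collapses to $y\diamond(z\diamond x)$, and I expand the remaining $x\diamond(y\diamond z)$ and $\varepsilon(y,z)\,x\diamond(z\diamond y)$ by Zinbiel. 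Then I match the two sides monomial by monomial. After the dust settles, each of the four surviving monomials $(x\diamond z)\diamond y$, $(z\diamond x)\diamond y$, $(y\diamond z)\diamond x$, $(z\diamond y)\diamond x$ (together with a common $x\diamond(y\diamond z)$ on one side that must be re-expanded) appears with the same coefficient on both sides, once one uses the bicharacter axioms $\varepsilon(a+b,c)=\varepsilon(a,c)\varepsilon(b,c)$ and $\varepsilon(a,b+c)=\varepsilon(a,b)\varepsilon(a,c)$ to rewrite factors like $\varepsilon(x,y)\varepsilon(x,z)$ as $\varepsilon(x,y+z)$.

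The main obstacle is purely bookkeeping: propagating the bicharacter factors correctly through two applications of the color Zinbiel identity and collating matching monomials on the two sides. No genuinely new algebraic input is needed beyond axiom (2) of a Zinbiel color algebra and the multiplicativity of $\varepsilon$; the classical (ungraded) proof that a Zinbiel algebra with the symmetrized product is commutative associative goes through verbatim once the $\varepsilon$-signs are tracked.
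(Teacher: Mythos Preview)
Your proposal is correct and follows essentially the same approach as the paper: direct verification of $\varepsilon$-commutativity, associativity, and the representation identity $\frkL_{x\cdot y}=\frkL_x\frkL_y$, each by expanding and applying the color Zinbiel axiom. The only organizational difference is that the paper handles associativity asymmetrically---it applies the Zinbiel identity three times to rewrite $(x\cdot y)\cdot z$ into the four-term expression $x\diamond(y\diamond z)+\varepsilon(x,y+z)(y\diamond z)\diamond x+\varepsilon(y,z)\varepsilon(x,y+z)(z\diamond y)\diamond x+\varepsilon(y,z)\,x\diamond(z\diamond y)$, which is exactly the raw expansion of $x\cdot(y\cdot z)$ with no Zinbiel moves needed on that side---whereas your symmetric plan of collapsing and expanding on both sides leads to mismatched monomial lists that then require two further Zinbiel applications (the ``re-expanding'' you allude to) before they agree; this works but is less economical.
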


\begin{proof}
First, for any homogeneous elements $x, y\in A$, we have
\begin{eqnarray*}
&&\varepsilon(x, y)y \cdot x=\varepsilon(x, y)(y\diamond x+\varepsilon(y, x)x\diamond y  )= x\diamond y+\varepsilon(x, y)y\diamond x=x\cdot y.
\end{eqnarray*}
Thus, the $\varepsilon$-commutativity follows.

Secondly, for any homogeneous elements $x, y, z\in A$, we have
\begin{eqnarray*}
&&(x\cdot y)\cdot z=(x\diamond y+\varepsilon(x, y)y\diamond x  )\cdot z\\
&=& (x\diamond y)\diamond z+\varepsilon(x+y, z)z\diamond (x\diamond y)
+\varepsilon(x, y)\{(y\diamond x)\diamond z+\varepsilon(x+y, z)z\diamond (y\diamond x) \}\\
&=& (x\diamond y)\diamond z+\varepsilon(x, y)(y\diamond x)\diamond z+\varepsilon(x, y)\varepsilon(x+y, z)z\diamond (y\diamond x)
+\varepsilon(x+y, z)z\diamond (x\diamond y)\\
&=& (x\diamond y)\diamond z+\varepsilon(x, y)(y\diamond x)\diamond z+
\varepsilon(x, y)\varepsilon(x+y, z)\{(z\diamond y)\diamond x +\varepsilon(z, y)(y\diamond z)\diamond x\}\\
&&+\varepsilon(x+y, z)\{(z\diamond x)\diamond y +\varepsilon(z, x)(x\diamond z)\diamond y\}\\
&=& \{(x\diamond y)\diamond z+\varepsilon(x, y)(y\diamond x)\diamond z\}+
\{\varepsilon(x, y+z)(y\diamond z)\diamond x+\varepsilon(x, y+z)\varepsilon(y, z)(z\diamond y)\diamond x\}\\
&&+\varepsilon(y, z)\{(x\diamond z)\diamond y+\varepsilon(x, z)(z\diamond x)\diamond y\}\\
&=& x\diamond (y\diamond z)+\varepsilon(x, y+z)(y\diamond z)\diamond x+\varepsilon(y, z)\varepsilon(x, y+z)(z\diamond y)\diamond x
+\varepsilon(y, z)x\diamond (z\diamond y),
\end{eqnarray*}
and
\begin{eqnarray*}
&&x\cdot (y\cdot z)=x\cdot (y\diamond z+\varepsilon(y, z)z\diamond y  )\\
&=& x\diamond (y\diamond z)+\varepsilon(x, y+z)(y\diamond z)\diamond x
+\varepsilon(y, z)\{x\diamond (z\diamond y)+\varepsilon(x, y+z)(z\diamond y)\diamond x \}\\
&=& x\diamond (y\diamond z)+\varepsilon(x, y+z)(y\diamond z)\diamond x+\varepsilon(y, z)\varepsilon(x, y+z)(z\diamond y)\diamond x
+\varepsilon(y, z)x\diamond (z\diamond y).\\
\end{eqnarray*}
Thus, the associativity follows.

From the definition of $\frkL $, it follows that
$$
\frkL_{ xy}z=(x\cdot y) \diamond z=(x\diamond y+\varepsilon(x, y)(y\diamond x))\diamond z
=x\diamond  (y\diamond  z)=\frkL_{x} \frkL_{y}(z).
$$
Hence, $(A,\frkL)$ is a representation of the $\varepsilon$-commutative associative algebra $(A,\cdot, \varepsilon)$.
\end{proof}

\begin{defi}\label{pre}
  $(A,\diamond,\ast,\varepsilon)$ is called a  pre-$F$-manifold color algebra if $(A,\diamond, \varepsilon)$ is a  Zinbiel color algebra and
  $(A,\ast, \varepsilon)$ is a pre-Lie color algebra, such that,  for all homogeneous elements $x,y,z,w\in A$,
  $$F_1(x\cdot y,z,w)=x\diamond F_1(y,z,w)+\varepsilon(x,y)y\diamond F_1(x,z,w),$$
  \begin{eqnarray*}
    &&(F_1(x,y,z)+\varepsilon(y, z)F_1(x,z,y)+\varepsilon(x, y+z)F_2(y,z,x))\diamond w\\
    &=&\varepsilon(x, y+z)F_2(y,z,x\diamond w)-x\diamond F_2(y,z,w).
  \end{eqnarray*}
where $F_1,F_2:\otimes^3 A\longrightarrow A$ are defined by
  \begin{eqnarray*}
    F_1(x,y,z)&=&x\ast(y\diamond z)-\varepsilon(x,y)y\diamond(x\ast z)-[x,y]\diamond z,\\
    F_2(x,y,z)&=&x\diamond(y\ast z)+\varepsilon(x,y)y\diamond(x\ast z)-(x\cdot y)\ast z,
  \end{eqnarray*}
and the operation $\cdot$ and bracket $[ , ]$ are defined by
\begin{equation*}\label{eq:pHM-operations}
  x\cdot y=x\diamond y+\varepsilon(x, y)y\diamond x,\quad [x,y]=x\ast y-\varepsilon(x,y)y\ast x.
\end{equation*}
\end{defi}

It is well known that $(A,[ , ], \varepsilon)$ is a Lie color algebra, and $(A, L)$ is a representation of the  Lie color algebra $(A, [ , ], \varepsilon)$ where
$L: A\longrightarrow\gl(A)$ is defined by
$$L_{ x}y=x\ast y,$$ for any homogeneous elements $x,y\in A$.

\begin{thm}
  Let $(A,\diamond,\ast, \varepsilon)$ be a pre-$F$-manifold color algebra. Then
   \begin{itemize}
\item[$\rm(1)$]
   $(A,\cdot,[ , ],\varepsilon)$ is an $F$-manifold color algebra, where the operation $\cdot$ and bracket $[ , ]$ are defined in Definition \ref{pre}.
\item[$\rm(2)$]$(A;L,\frkL)$ is a representation of $(A,\cdot,[ , ],\varepsilon)$, where $L, \frkL:A\longrightarrow\gl(A)$ are defined by
$$L_{ x}y=x\ast y,\ \ \ \frkL_{ x}y=x\diamond y,$$
for any homogeneous elements $x,y\in A$.

\end{itemize}
\end{thm}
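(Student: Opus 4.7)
The plan is to identify the trilinear maps $F_1$ and $F_2$ with the representation operators $R_{L,\frkL}$ and $S_{L,\frkL}$, and then derive both parts from this identification together with the two pre-$F$-manifold color algebra axioms. A direct expansion of $R_{L,\frkL}(x,y)z = L_x\frkL_y z - \varepsilon(x,y)\frkL_y L_x z - \frkL_{[x,y]}z$ using $L_u(\cdot) = u\ast(\cdot)$ and $\frkL_u(\cdot) = u\diamond(\cdot)$ yields $R_{L,\frkL}(x,y)z = F_1(x,y,z)$, and the analogous computation gives $S_{L,\frkL}(x,y)z = F_2(x,y,z)$. Under this identification, the first pre-$F$-manifold color algebra axiom coincides verbatim with the first mixed representation axiom in the definition of a representation of an $F$-manifold color algebra.

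The crucial identity to establish next is
\[
P_x(y,z) = F_1(x,y,z) + \varepsilon(y,z) F_1(x,z,y) + \varepsilon(x, y+z) F_2(y,z,x).
\]
This follows by expanding the right-hand side via $u\cdot v = u\diamond v + \varepsilon(u,v) v\diamond u$ and $[u,v] = u\ast v - \varepsilon(u,v) v\ast u$, and matching against $[x, y\cdot z] - [x,y]\cdot z - \varepsilon(x,y) y\cdot[x,z]$ by repeated use of $\varepsilon(a+b,c) = \varepsilon(a,c)\varepsilon(b,c)$ and $\varepsilon(a,b)\varepsilon(b,a) = 1$. With this identity in hand, the left-hand side of the second pre-$F$-manifold axiom equals $P_x(y,z)\diamond w = \frkL_{P_x(y,z)}(w)$, so that axiom becomes precisely the second mixed representation axiom.

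For part $(1)$, I would insert $x\cdot y$ into the crucial identity to obtain three summands. The two $F_1$-summands $F_1(x\cdot y,z,w)$ and $F_1(x\cdot y,w,z)$ are rewritten directly by the first pre-$F$-manifold axiom. For the remaining $F_2$-summand, split $x\cdot y = x\diamond y + \varepsilon(x,y) y\diamond x$ and apply the second pre-$F$-manifold axiom, solved for $F_2(z,w, u\diamond v)$ in terms of $P_u(z,w)\diamond v$ and $F_2(z,w,v)$, to each piece. Regrouping the resulting six terms by means of $u\cdot P_v(z,w) = u\diamond P_v(z,w) + \varepsilon(u, v+z+w) P_v(z,w)\diamond u$ yields $x\cdot P_y(z,w) + \varepsilon(x,y) y\cdot P_x(z,w)$, establishing the color Hertling-Manin relation. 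Part $(2)$ is then immediate: Lemma \ref{lem4.2} gives that $(A,\frkL)$ is a representation of $(A,\cdot,\varepsilon)$, and $(A,L)$ is a representation of the Lie color algebra $(A,[ , ],\varepsilon)$ by the color pre-Lie result noted just before the theorem statement; the two mixed representation axioms have already been identified with the two pre-$F$-manifold color algebra axioms. The principal obstacle throughout is the $\varepsilon$-factor bookkeeping, which is most delicate in establishing the crucial identity and in the $F_2$-reduction step of part $(1)$.
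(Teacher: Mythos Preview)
Your proposal is correct and follows essentially the same route as the paper: both establish the key identity $P_x(y,z)=F_1(x,y,z)+\varepsilon(y,z)F_1(x,z,y)+\varepsilon(x,y+z)F_2(y,z,x)$, identify $F_1=R_{L,\frkL}$ and $F_2=S_{L,\frkL}$, and then reduce the color Hertling--Manin relation to the two pre-$F$-manifold axioms (the paper groups the difference $P_{x\cdot y}(z,w)-x\cdot P_y(z,w)-\varepsilon(x,y)y\cdot P_x(z,w)$ into four brackets, whereas you build $P_{x\cdot y}(z,w)$ forward by solving the second axiom for $F_2(z,w,u\diamond v)$, but this is only a cosmetic regrouping). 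Part~(2) is handled identically in both.
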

\begin{proof}
(1)\  By Lemma \ref{lem4.2}, $(A,\cdot, \varepsilon)$ is an $\varepsilon$-commutative associative algebra.
It is known that $(A,[ , ], \varepsilon)$ is a Lie color algebra. Thus we only need to prove that the  color  Hertling-Manin relation holds.

Assume that $x,y,z,w\in A$ are all homogeneous elements.
We claim the following identity:
\begin{equation}\label{equ}
  P_x(y,z)=F_1(x,y,z)+\varepsilon(y,z)F_1(x,z,y)+\varepsilon(x, y+z)F_2(y,z,x).
\end{equation}
In fact, we have
  \begin{eqnarray*}
    &&P_x(y,z)=[x,y\cdot z]-[x,y]\cdot z-\varepsilon(x,y)y\cdot [x,z]\\
    &=&x\ast (y\cdot z)-\varepsilon(x,y+z)(y\cdot z)\ast x-[x,y]\diamond z-\varepsilon(x+y,z)z\diamond [x,y]\\
     &&-\varepsilon(x, y)\{y\diamond [x,z]+\varepsilon(y,x+z)[x,z]\diamond y\}\\
     &=&x\ast (y\diamond z)-\varepsilon(x,y)y\diamond (x\ast z)-[x,y]\diamond z+\varepsilon(y,z)\{
     x\ast (z\diamond y)-\varepsilon(x,z)z\diamond (x\ast y)-[x,z]\diamond y\}\\
     &&+\varepsilon(x, y+z)\{y\diamond (z\ast x)+\varepsilon(y,z)z\diamond (y\ast x)-(yz)\ast x\}\\
     &=&F_1(x,y,z)+\varepsilon(y,z)F_1(x,z,y)+\varepsilon(x, y+z)F_2(y,z,x).
  \end{eqnarray*}
With the above identity,  we have
  \begin{eqnarray*}
    &&P_{x\cdot y}(z,w)-x\cdot P_{y}(z,w)-\varepsilon(x,y) y\cdot P_{x}(z,w)\\
    &=&F_1(x\cdot y,z,w)+\varepsilon(z,w) F_1(x\cdot y,w,z)+\varepsilon(x+y, z+w)F_2(z,w,x\cdot y)\\
    &&-x\cdot\{F_1(y,z,w)+\varepsilon(z,w)F_1(y,w,z)+\varepsilon(y,z+w)F_2(z,w,y)\}\\
    &&-\varepsilon(x,y)y\cdot\{F_1(x,z,w)+\varepsilon(z,w)F_1(x,w,z)+\varepsilon(x, z+w)F_2(z,w,x)\}\\
    &=&\big\{ F_1(x\cdot y,z,w)-x\diamond F_1(y,z,w)-\varepsilon(x,y)y\diamond F_1(x,z,w)\big\}\\
    &&+\big\{ \varepsilon(z,w)F_1(x\cdot y,w,z)-\varepsilon(z,w)x\diamond F_1(y,w,z)-\varepsilon(x,y)\varepsilon(z,w)y\diamond F_1(x,w,z)\big\}\\
    &&+\big\{\varepsilon(x+y, z+w)F_2(z,w,x\diamond y)-\varepsilon(x,y)\varepsilon(y,x+z+w)F_1(x,z,w)\diamond y\\
    &&-\varepsilon(x,y)\varepsilon(z,w)\varepsilon(y,x+w+z)F_1(x,w,z)\diamond y
      -\varepsilon(x,y)\varepsilon(x,z+w)\varepsilon(y,x+w+z)F_2(z,w,x)\diamond y\\
    &&-\varepsilon(y,z+w)x\diamond F_2(z,w,y)\big\}
      +\big\{\varepsilon(x+y, z+w)\varepsilon(x, y)F_2(z,w,y\diamond x)\\
    &&-\varepsilon(x,y+z+w)F_1(y,z,w)\diamond x-\varepsilon(z, w)\varepsilon(x, y+z+w)F_1(y,w,z)\diamond x\\
    &&-\varepsilon(y, z+w)\varepsilon(x, z+w+y)F_2(z,w,y)\diamond x-\varepsilon(x, y)\varepsilon(x, z+w)y\diamond F_2(z,w,x)\big\}\\
    &=&\varepsilon(x+y, z+w)\big\{F_2(z,w,x\diamond y)-\varepsilon(z+w, x)F_1(x,z,w)\diamond y\\
    &&-\varepsilon(z,w)\varepsilon(z+w, x)F_1(x,w,z)\diamond y-F_2(z,w,x)\diamond y-\varepsilon(z+w,x)x\diamond F_2(z,w,y)\big\}\\
    &&+\varepsilon(x,y+z+w)\big\{\varepsilon(y, z+w)F_2(z,w,y\diamond x)-F_1(y,z,w)\diamond x-\varepsilon(z, w)F_1(y,w,z)\diamond x\\
    &&-\varepsilon(y, z+w)F_2(z,w,y)\diamond x-y\diamond F_2(z,w,x)\big\}\\
    &=&\varepsilon(y, z+w)\big\{\varepsilon(x, z+w)F_2(z,w,x\diamond y)-F_1(x,z,w)\diamond y\\
    &&-\varepsilon(z,w)F_1(x,w,z)\diamond y-\varepsilon(x, z+w)F_2(z,w,x)\diamond y-x\diamond F_2(z,w,y)\big\}\\
    &=& 0.
  \end{eqnarray*}
  Hence, $(A,\cdot,[ , ],\varepsilon)$ is an $F$-manifold color algebra.

(2)\     $(A,\frkL)$ is a representation of the $\varepsilon$-commutative associative algebra $(A,\cdot, \varepsilon)$ by Lemma \ref{lem4.2}. It is known that $(A, L)$ is a representation of the  Lie color algebra $(A, [ , ], \varepsilon)$.  Note that $F_1(x,y,z)=R_{L,\frkL}(x,y)(z)$, thus the equation
$$F_1(x\cdot y,z,w)=x\diamond F_1(y,z,w)+\varepsilon(x,y)y\diamond F_1(x,z,w)$$
implies  $$R_{L,\frkL}(x\cdot y,z)=\frkL_{x}R_{L,\frkL}(y,z)+\varepsilon(x,y)\frkL_{y}R_{L,\frkL}(x,z).$$
On the other hand,
 $F_2(x,y,z)=S_{L,\frkL}(x,y)(z)$, thus combining (\ref{equ}), the equation
  \begin{eqnarray*}
    &&(F_1(x,y,z)+\varepsilon(y, z)F_1(x,z,y)+\varepsilon(x, y+z)F_2(y,z,x))\diamond w\\
    &=&\varepsilon(x, y+z)F_2(y,z,x\diamond w)-x\diamond F_2(y,z,w)
  \end{eqnarray*}
  implies $$ \frkL_{P_{x}(y,z)}=\varepsilon(x,y+z)S_{L,\frkL}(y,z)\frkL_{x}-\frkL_{x} S_{L,\frkL}(y,z).$$
 Hence,  $(A;L,\frkL)$ is a representation of $(A,\cdot,[ , ],\varepsilon)$.
\end{proof}

\section{Acknowledgements}
ZC was partially supported by National Natural Science Foundation of China (11931009) and Natural Science Foundation of Tianjin (19JCYBJC30600).
J. Li was partially supported by National natural Science Foundation of China (11771331).

 \end{document}